\documentclass[12pt]{amsart}

\usepackage{amsfonts, amsmath, amscd}
\usepackage[psamsfonts]{amssymb}
\usepackage{amssymb}
\usepackage{enumerate}
\usepackage{pb-diagram}
\usepackage[usenames]{color}
\usepackage{amssymb}
\usepackage{amsfonts}
\usepackage{graphicx}
\usepackage{amscd}
\textwidth	420pt
\newtheorem{theorem}{Theorem}[section]
\newtheorem{corollary}[theorem]{Corollary}
\newtheorem{lemma}[theorem]{Lemma}
\newtheorem{proposition}[theorem]{Proposition}
\newtheorem{problem}[theorem]{Problem}

\theoremstyle{definition}

\newtheorem{remark}[theorem]{Remark}

\def\supp{{\mathrm {supp}}\,}
\def\span{{\mathrm {span}}\,}

\title[A note on Banach spaces]
{A note on Banach spaces $E$ admitting a continuous map from $C_p(X)$ onto $E_{w}$}
\author{Jerzy K\c{a}kol, Arkady Leiderman, Artur Michalak}

\address{Faculty of Mathematics and Informatics, A. Mickiewicz University,
61-614 Pozna\'{n}, Poland and Institute of Mathematics Czech Academy of Sciences, Prague, Czech Republic}
\email{kakol@amu.edu.pl}

\address{Department of Mathematics, Ben-Gurion University of the Negev, Beer Sheva, P.O.B. 653, Israel}
\email{arkady@math.bgu.ac.il}

\address{Faculty of Mathematics and Informatics, A. Mickiewicz University,
61-614 Pozna\'{n}, Poland}
\email{michalak@amu.edu.pl}

\keywords{Banach space, weak topology, $C_p(X)$ space,  (sequentially) continuous (linear) map.}
\subjclass[2010]{46B04; 46E10; 46E15}
\date{\today}

\begin{document}

\begin{abstract}
$C_p(X)$ denotes the space of continuous real-valued functions on a Tychonoff space $X$ endowed with the topology of pointwise convergence.
A Banach space $E$ equipped with the weak topology is denoted by $E_{w}$.
It is unknown whether $C_p(K)$ and $C(L)_{w}$ can be homeomorphic for infinite compact spaces $K$ and $L$ \cite{Krupski-1}, \cite{Krupski-2}.
In this paper we deal with a more general question: what are the Banach spaces $E$ which admit certain continuous surjective mappings $T: C_p(X) \to E_{w}$
for an infinite Tychonoff space $X$?

First, we prove that if $T$ is linear and sequentially continuous, then the Banach space $E$ must be finite-dimensional,
 thereby resolving an open problem posed in \cite{Kakol-Leiderman}.
Second, we show that if there exists a homeomorphism $T: C_p(X) \to E_{w}$ for some infinite Tychonoff space $X$ and a Banach space $E$,
then (a) $X$ is a countable union of compact sets $X_n, n \in \omega$, where at least one component $X_n$ is non-scattered;
(b) $E$ necessarily contains an isomorphic copy of the Banach space $\ell_{1}$.
\end{abstract}

\thanks{The first named author
 is supported by the GA\v{C}R project 20-22230L and RVO: 67985840.}

\maketitle

\section{Introduction}\label{intro}

For a locally convex  space $E$ by $E_w$ we denote the space $E$ endowed with the weak topology $w=\sigma(E,E')$ of $E$, where $E'$ means the topological dual of $E$.
All topological spaces in the paper are assumed to be Tychonoff.
For a Tychonoff space $X$ let $C_{p}(X)$ be the space of continuous real-valued functions $C(X)$ endowed with the pointwise convergence topology.
If $X$ is a compact space, then $C(X)$ means the Banach space equipped with the $\sup$-norm.

In \cite{Krupski-1} M. Krupski asked the following question:
\begin{problem} \label{question1} 
 Suppose that $K$ is an infinite
(metrizable) compact space. Can $C(K)_w$ and $C_p(K)$ be homeomorphic?
\end{problem}

The main result of \cite{Krupski-1} shows that the
answer is "no", provided $K$ is an infinite metrizable compact $C$-space (in particular, if $K$ is
 any infinite metrizable finite-dimensional compact space).

A more general problem was posed in \cite{Krupski-2}:
\begin{problem} \label{question2}
Let $K$ and $L$ be infinite compact spaces. Can it happen that $C_p(L)$and $C(K)_w$ are homeomorphic?
\end{problem}

Both Problems \ref{question1}, \ref{question2} remain open, although in some cases the answer is known to be negative.
 For example,  the most easy examples of compact spaces $K$ such that $C_p(K)$ and $C(K)_w$ are not homeomorphic,
 as already observed in \cite[(D), p. 648]{Krupski-2}, are scattered compact spaces.
The reason is the following: \emph{the space $C(K)_w$  is not Fr\'echet-Urysohn for every infinite compact space $K$}. 
Recall that a topological space $X$ is called {\it scattered} if every non-empty subspace of $X$ contains an isolated point,
and $C_p(K)$  is Fr\'echet-Urysohn for any scattered compact space $K$.
 We refer the reader to articles \cite{Krupski-1}, \cite{Krupski-2} (and references  therein) discussing these problems and providing more concrete 
 examples of compact spaces $K$ and $L$ such that $C_p(K)$ and $C(L)_{w}$ are not homeomorphic. 
We recommend also the paper \cite{Marciszewski} which surveys a substantial progress in the study of various types of homeomorphisms between the function spaces $C_p(X)$.

One of the starting points of our research is the following result of M. Krupski and W. Marciszewski:  
for any infinite compact spaces $K$ and $L$ a homeomorphism $T :C(K)_{w} \to C_p(L)$ which is in addition uniformly continuous, does not exist
(see \cite[Proposition 3.1]{Krupski-2}). 
In particular, there is no a linear homeomorphism between function spaces $C(K)_{w}$ and $C_p(L)$ \cite[Corollary 3.2]{Krupski-2}.

This short summary makes it clear our motivation for a formulation of the following "linear" variant of the above Problem \ref{question1}.

\begin{problem}[\cite{Kakol-Leiderman}]\label{pro-1}
Does there exist an infinite compact space $X$ admitting a continuous linear surjection $T:C_{p}(X)\rightarrow C(X)_{w}$?
\end{problem}

It has been proved earlier that no infinite metrizable compact $C$-space $X$ admits such a mapping \cite{Kakol-Leiderman}.

In this note we suggest to consider analogous questions in a more general framework. 
It was H.H. Corson who initiated the investigation aiming to find criteria or
techniques which can be used to determine whether or not a given Banach
space $E$ under its weak topology has any of the usual topological properties (see \cite{Corson}).
The next problem combines together two lines of research: both $E_w$ and $C_p(X)$.

\begin{problem}\label{pro-2}
Let $E$ be a Banach space which admits certain continuous surjective mappings $T: C_p(X) \to E_{w}$
for an infinite Tychonoff space $X$.
1) Characterize such $E$; 2) What are the possible restrictions on $X$?
\end{problem}

Thus, this paper continues several lines of research initiated in the recent papers
\cite{Gab}, \cite{Gab2}, \cite{Gab3}, \cite{Kakol-Leiderman}, \cite{KalMol}, \cite{Krupski-1}, \cite{Krupski-2}.

Recall that a mapping between topological spaces is {\it sequentially continuous} if it sends converging sequences to converging sequences. 
The following statement proved in Section \ref{section1}, which is one of the main results of our paper, gives a negative answer to Problem \ref{pro-1} in a very strong form.

\begin{theorem}\label{Theor:general}
Let $X$ be any Tychonoff space and let $E$ be a Banach space.
Then every sequentially continuous linear operator $T:C_p(X)\to E_w$ has a finite-dimensional range.
\end{theorem}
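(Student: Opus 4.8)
The plan is to argue by contradiction: assume $\dim T(C_p(X))=\infty$ and, after a reduction, produce a \emph{pointwise bounded} sequence in $C(X)$ whose $T$-image is unbounded in norm, which is impossible since weakly convergent sequences in a Banach space are norm bounded. The first ingredient, a \emph{scaling lemma}, is precisely this impossibility adapted to sequential continuity: if $(f_n)\subseteq C(X)$ is pointwise bounded, then $\sup_n\|Tf_n\|<\infty$. Indeed, suppose $\|Tf_n\|\to\infty$ (pass to a subsequence and assume $\|Tf_n\|\ge 1$) and set $g_n=f_n/\sqrt{\|Tf_n\|}$. For each $x$ we have $|g_n(x)|\le M_x/\sqrt{\|Tf_n\|}\to 0$, where $M_x=\sup_n|f_n(x)|$, so $g_n\to 0$ in $C_p(X)$; by sequential continuity $Tg_n\to 0$ in $E_w$, hence $(Tg_n)$ is weakly bounded and, by the uniform boundedness principle, norm bounded — contradicting $\|Tg_n\|=\sqrt{\|Tf_n\|}\to\infty$. (When $X$ is compact this already forces $T\colon C(X)\to E$ to be a bounded operator.)

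Next I would reduce to compact $X$. Consider $C^*(X)$, the subalgebra of bounded functions, identified with $C(\beta X)$. If $\dim T(C^*(X))<\infty$, then for any $f\in C(X)$ the truncations $f_k=\max(-k,\min(f,k))$ belong to $C^*(X)$ and $f_k\to f$ in $C_p(X)$, so sequential continuity puts $Tf$ in the finite-dimensional, hence weakly closed, subspace $T(C^*(X))$; thus $\dim T(C(X))<\infty$, contradiction. So $\dim T(C^*(X))=\infty$. Since a sequence converging in $C_p(\beta X)$ converges pointwise on $X\subseteq\beta X$, the restriction $T|_{C^*(X)}\colon C_p(\beta X)\to E_w$ is again a sequentially continuous linear operator, now with infinite-dimensional range. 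Hence it suffices to handle compact $X$.

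For compact $X$ the plan is a localization argument. Call $x\in X$ \emph{$T$-active} if every neighbourhood $U$ of $x$ carries some $h\in C(X)$ with $0\le h\le 1$, $\supp h\subseteq U$, and $Th\neq 0$; let $A$ be the set of $T$-active points. If $A$ is finite, then (splitting into positive and negative parts and using finite sup-norms) $Th=0$ whenever $\supp h$ is covered by neighbourhoods of non-active points, so a finite partition of unity gives $Th=0$ for every $h$ with $\supp h\cap A=\varnothing$; a truncation argument then yields $Tf=0$ for every $f$ vanishing on $A$, so $T$ factors through $f\mapsto(f(p))_{p\in A}\in\mathbb{R}^{A}$ and has finite rank — contradiction. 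Therefore $A$ is infinite. Choosing distinct $x_n\in A$ and, around a limit point of $\{x_n\}$, pairwise disjoint open sets $U_n\ni x_n$, activity yields $h_n\in C(X)$ with $0\le h_n\le 1$, $\supp h_n\subseteq U_n$, and $s_n:=\|Th_n\|>0$. Setting $g_n=(n/s_n)h_n$: since the supports are pairwise disjoint each point lies in at most one $U_n$, so $(g_n)$ is pointwise bounded, yet $\|Tg_n\|=n\to\infty$, contradicting the scaling lemma.

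The step I expect to be the main obstacle is the dichotomy on $A$, specifically the ``$A$ finite $\Rightarrow$ finite rank'' direction, where compactness is used in an essential way (finite subcovers, partitions of unity, finiteness of sup-norms); it is exactly to have this tool available that the reduction to $\beta X$ is performed first, since on a general Tychonoff space the behaviour of $T$ near the non-active points is much harder to pin down. A secondary technical point is obtaining the pairwise disjoint neighbourhoods $U_n$ in an arbitrary compact Hausdorff space (which need not be metrizable or sequentially compact), handled by the standard Hausdorff separation argument organized around a limit point of $\{x_n\}$.
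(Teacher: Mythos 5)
Your proposal is correct and follows essentially the same route as the paper: a boundedness lemma obtained by rescaling a pointwise-null sequence (the paper uses $n f_n/\|T(f_n)\|$ where you use $f_n/\sqrt{\|Tf_n\|}$), a localization on compact $X$ via the dichotomy on the set of ``active'' points (the complement of the paper's set $A$) with partitions of unity, and the reduction of general Tychonoff $X$ to $\beta X$ via the sequential density of $C^*_p(X)$. Only the order of the two reductions and some bookkeeping differ.
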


In Section \ref{section2} we consider another particular version of Problem \ref{pro-2}, imposing on $T$ a requirement to be a (non-linear) homeomorphism. 

\begin{remark}\label{remark1}
Let $E$ be a Banach space, and $X$ be an infinite Tychonoff space.
If there is a homeomorphism $T: C_p(X) \to E_{w}$, then $E$ cannot be reflexive.
This is because $E_w$ is $K_{\sigma}$ (a countable union of compact subspaces),
 for every reflexive Banach space $E$. However, $C_p(X)$ is $K_{\sigma}$ if and only if $X$ is finite \cite[Theorem I.2.1]{Arch}.
\end{remark}

Our next statement, which is one of the main results of our paper, significantly extends Remark \ref{remark1}. 

\begin{theorem}\label{theorem3}
Let $E$ be a Banach space.
If there exist an infinite Tychonoff space $X$ and a homeomorphism $T: C_p(X) \to E_{w}$, then
\begin{itemize}
\item[{\rm (a)}] $X$ is a countable union of compact sets $X_n, n \in \omega$, where at least one component $X_n$ is non-scattered;
\item[{\rm (b)}] $E$ contains an isomorphic copy of the Banach space $\ell_{1}$.
\end{itemize}
\end{theorem}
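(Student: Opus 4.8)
The plan is to establish (a) first and then to derive (b) from it.

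\textit{Part (a): $X$ is $\sigma$-compact.} Writing $B_E$ for the closed unit ball, we have $E_w=\bigcup_n nB_E$, where each $nB_E$ is norm-closed and convex, hence weakly closed; transporting this cover through $T$ presents $C_p(X)$ as a countable union of closed sets $F_n=T^{-1}(nB_E)$. The task is to convert this bornological-type cover of $E_w$ into a genuine topological statement about $X$. Here I would use that $nB_E$ is a bounded convex weakly closed set, so it is a subspace of the weak$^{\ast}$-compact convex set $nB_{E^{\ast\ast}}$ (since $\sigma(E,E^{\ast})$ agrees on $E$ with $\sigma(E^{\ast\ast},E^{\ast})$); this uniform ``compact framing'' of the $F_n$, combined with the classical description of bounded subsets of $C_p(X)$ and with the way compact covers of $X$ govern such covers of $C_p(X)$, should force $X=\bigcup_n X_n$ with each $X_n$ compact. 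This is the more technical part of (a), as boundedness is not a purely topological notion and one must extract its correct topological shadow. Along the way one also notes that $X$ cannot be countably infinite (otherwise $C_p(X)$ is metrizable and infinite-dimensional, while $E_w$ is metrizable only in finite dimensions) and that $E$ is necessarily non-separable (a countable separating subset of $E^{\ast}$ would give $C_p(X)\cong E_w$ countable pseudocharacter, hence $X$ countable).

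\textit{Part (a): some $X_n$ is non-scattered.} Suppose every $X_n$ is scattered. Then $X$ is a countable union of scattered compacta, and invoking the known fact that $C_p(Y)$ is Fr\'echet--Urysohn whenever $Y$ is such a union (starting from the case of a single scattered compactum), $C_p(X)$ is Fr\'echet--Urysohn. But $E_w$ is never Fr\'echet--Urysohn when $\dim E=\infty$: the point $0$ lies in the weak closure of the unit sphere $S_E$ (each basic weak neighbourhood of $0$ is finite-codimensional and so meets $S_E$), yet no sequence in $S_E$ converges weakly to $0$ --- immediately if $E$ has the Schur property, and otherwise after replacing $S_E$ by the Arens-type set $\{\,k u_m+u_k:k<m\,\}$ built from a normalized weakly null sequence $(u_k)$, for which $0$ is again in the weak closure while any subsequence extracted from it is either unbounded or eventually of the form $(k u_{m_j}+u_k)_j$, converging weakly to $u_k\neq 0$. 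This contradiction produces a non-scattered $X_{n_0}$.

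\textit{Part (b).} The perfect kernel of $X_{n_0}$ is a non-empty perfect compact space, which maps continuously onto $[0,1]$; since $X$ is $\sigma$-compact, hence normal, and $X_{n_0}$ is closed, this extends by Tietze to a continuous surjection $X\twoheadrightarrow[0,1]$, so composition with restriction embeds $C_p([0,1])$ as a \emph{closed} linear subspace of $C_p(X)$, whence $E_w$ contains a closed copy of $C_p([0,1])$. It now suffices to show this forces $\ell_1\hookrightarrow E$; I argue the contrapositive. If $\ell_1\not\hookrightarrow E$, then by Rosenthal's $\ell_1$-theorem each $nB_E$ is weakly precompact, and a weakly precompact set $W$ with the weak topology is a dense subspace of the weak$^{\ast}$-compactum $\overline{W}^{\,w^{\ast}}\subseteq E^{\ast\ast}$, which is a Fr\'echet--Urysohn (angelic, Rosenthal-type) compact space; in particular $(W,w)$ is Fr\'echet--Urysohn. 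Hence any weakly closed $Z\subseteq E$ is the countable increasing union $\bigcup_n (Z\cap nB_E)$ of closed Fr\'echet--Urysohn subspaces, each lying inside the Fr\'echet--Urysohn compactum $\overline{Z\cap nB_E}^{\,w^{\ast}}$, so $(Z,w)$ embeds densely in the $\sigma$-compact angelic space $\bigcup_n \overline{Z\cap nB_E}^{\,w^{\ast}}$. The closing step is to see that such a $Z$ cannot be homeomorphic to $C_p([0,1])$: the latter is not Fr\'echet--Urysohn (as $[0,1]$ is non-scattered) but has countable tightness and is topologically homogeneous, and from its presentation as a countable union of closed Fr\'echet--Urysohn subspaces one reaches a contradiction --- for instance a Baire-category or tightness argument yields a point with a Fr\'echet--Urysohn neighbourhood, and homogeneity upgrades this to Fr\'echet--Urysohn-ness of the whole space.

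\textit{Main obstacle.} I expect the two genuinely hard points to be the transfer in the first step --- turning the non-topological cover $E_w=\bigcup_n nB_E$ into a $\sigma$-compact decomposition of $X$ --- and, above all, the closing step of (b). The latter is delicate precisely because $C_p([0,1])$ \emph{does} embed densely in $\sigma$-compact spaces, so one cannot stop at ``$(Z,w)$ is a dense subspace of a $\sigma$-compact space''; the decisive leverage has to come from the Fr\'echet--Urysohn/angelic nature of the pieces (equivalently, from $\ell_1\not\hookrightarrow E$), and organizing that leverage --- most plausibly through homogeneity together with a category or tightness argument --- is the crux of the proof.
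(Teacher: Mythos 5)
Your middle step (if every $X_n$ is scattered then $C_p(X)$ is Fr\'echet--Urysohn while $E_w$ never is for infinite-dimensional $E$) is correct and is essentially the paper's own argument via Lemma \ref{lem_help1} and Remark \ref{remark3}. The other two steps, which you yourself flag as the hard points, contain genuine gaps. For $\sigma$-compactness of $X$: the sets $T^{-1}(nB_E)$ are merely closed subsets of $C_p(X)$ --- $T$ is only a homeomorphism, so neither boundedness nor convexity of $nB_E$ is transported, and ``closed set sitting inside a weak$^*$-compactum of $E^{**}$'' has no usable topological shadow (every Tychonoff space sits inside a compactum). No actual argument is given here. The paper's proof uses a different and essential input: $E_w$ embeds into $C_p(B_{E'},w^*)$, i.e.\ it is an Eberlein--Grothendieck space, and Okunev's theorem \cite{okunev} then forces $X$ to be $\sigma$-compact whenever $C_p(X)$ is a continuous image of such a space. (Also, your side claim that $E$ must be non-separable is false: countable pseudocharacter of $C_p(X)$ gives $X$ \emph{separable}, not countable; were the claim true, it would settle Problem \ref{prob5}.)

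The more serious problem is the endgame of (b). Granting your closed copy of $C_p([0,1])$ in $E_w$ and that, when $\ell_1\not\hookrightarrow E$, each $Z\cap nB_E$ is Fr\'echet--Urysohn in the weak topology (which already requires a reduction to separable $E$ before Bourgain--Fremlin--Talagrand applies), you still must show that $C_p([0,1])$ cannot be a countable union of closed Fr\'echet--Urysohn subspaces of this kind. Your proposed mechanism --- Baire category producing a piece with nonempty interior, then homogeneity --- cannot work: $C_p([0,1])$ is meager in itself (it is the union of its norm balls, which are pointwise closed and nowhere dense), so a countable closed cover need not contain a member with nonempty interior, and ``countable tightness'' does not discriminate since every space in sight has it. The paper circumvents this entirely by making the non-Fr\'echet--Urysohn witness \emph{norm-bounded}: Lemma \ref{lem3} shows $X\setminus B(T)$ is finite, Lemma \ref{lem_help2} places a non-scattered compactum $Z$ inside $B(T)$, the set $F=\{f\in C(X):\supp(f)\subset W_t\}$ restricts onto all of $C_p(Z)$ and hence is closed and not Fr\'echet--Urysohn, while $T(F)$ is bounded in $E$ by the very definition of $B(T)$; then \cite[Proposition 4.4]{Barroso} yields $\ell_1\hookrightarrow E$. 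That boundedness is precisely what your unbounded copy of $C_p([0,1])$ lacks, and without it I do not see how to close your argument.
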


A Banach space $E$ is called {\it weakly compactly generated} (WCG) if there is a weakly compact subset $K$ in $X$ such that
$E=\overline{{\span}(K)}$. WCG spaces constitute a large and important class of Banach spaces \cite{fabian}.
Every weakly compact subspace of a Banach space is called an {\it Eberlein} compact space.
 
\begin{corollary}\label{cor:main1}
Let $E$ be a WCG (separable) Banach space.
If there exist an infinite Tychonoff space $X$ and a homeomorphism $T: C_p(X) \to E_{w}$, then
\begin{itemize}
\item[{\rm (a)}] $X$ is a countable union of Eberlein (metrizable, respectively) compact spaces $X_n, n \in \omega$, where at least one component $X_n$ is non-scattered;
\item[{\rm (b)}] $E$ contains an isomorphic copy of the Banach space $\ell_{1}$.
\end{itemize}
\end{corollary}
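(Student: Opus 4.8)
The proof is a short deduction from Theorem~\ref{theorem3}, combined with two standard facts. Part~(b) requires nothing new, since Theorem~\ref{theorem3}(b) asserts that $E$ contains an isomorphic copy of $\ell_{1}$ for \emph{every} Banach space admitting such a homeomorphism. For part~(a), Theorem~\ref{theorem3}(a) already yields a decomposition $X=\bigcup_{n\in\omega}X_{n}$ into compact sets with at least one $X_{n}$ non-scattered; the plan is to keep \emph{exactly these} sets $X_{n}$ — so that the non-scattered clause is inherited verbatim — and merely to strengthen ``compact'' to ``Eberlein compact'' when $E$ is WCG, resp.\ to ``metrizable'' when $E$ is separable.

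The first step transports the $X_{n}$ into a function space over $E_{w}$. Recall the canonical homeomorphic embedding $X\hookrightarrow C_{p}(C_{p}(X))$, $x\mapsto\delta_{x}$ with $\delta_{x}(f)=f(x)$. Since $T$ is a homeomorphism, the assignment $\phi\mapsto\phi\circ T^{-1}$ is a homeomorphism $C_{p}(C_{p}(X))\to C_{p}(E_{w})$; composing it with the canonical embedding gives a homeomorphic embedding $\iota\colon X\to C_{p}(E_{w})$, explicitly $\iota(x)(e)=\bigl(T^{-1}(e)\bigr)(x)$. Hence each $K_{n}:=\iota(X_{n})$ is a compact subspace of $C_{p}(E_{w})$ homeomorphic to $X_{n}$.

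The second step is to choose a dense subset $D\subseteq E_{w}$ and restrict functions to it. If $E$ is WCG, fix a weakly compact $L$ with $E=\overline{\span(L)}$; by the Krein--\v{S}mulian theorem we may take $L$ to be absolutely convex, and then $D:=\span(L)=\bigcup_{n}nL$ is a weakly dense subspace of $E$ which is $\sigma$-compact in the weak topology. If $E$ is separable, let $D$ be a countable norm-dense (hence weakly dense) subset. In either case the restriction map $\rho\colon C_{p}(E_{w})\to\mathbb{R}^{D}$, $\rho(f)=f|_{D}$, is continuous, and it is injective on each $K_{n}$ because two continuous functions on $E_{w}$ that agree on the dense set $D$ coincide. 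As $K_{n}$ is compact, $\rho$ restricts to a homeomorphism of $K_{n}$ onto a compact subspace of $C_{p}(D)$; thus $X_{n}$ is homeomorphic to a compact subspace of $C_{p}(D)$.

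It remains to read off the conclusion. In the separable case $C_{p}(D)\subseteq\mathbb{R}^{D}\cong\mathbb{R}^{\omega}$ is metrizable, so $X_{n}$ is metrizable. In the WCG case $D$ is $\sigma$-compact, and it is classical (cf.~\cite{Arch}) that every compact subspace of $C_{p}(S)$ with $S$ $\sigma$-compact is Eberlein compact; hence $X_{n}$ is Eberlein compact. The step I expect to need the most care is the choice of $D$ in the WCG case: one really must invoke an \emph{absolutely convex} weakly compact generator, so that $\span(L)=\bigcup_{n}nL$ exhausts a dense subspace by weakly compact sets — plain density of $\span(L)$ would not supply the $\sigma$-compact structure that the Eberlein criterion consumes. (In passing, the argument shows more: under the respective hypotheses on $E$, \emph{every} compact subset of $X$ is Eberlein, resp.\ metrizable.)
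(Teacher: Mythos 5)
Your argument is correct, but it runs in the opposite direction to the paper's. The paper works inside $C_p(X)$: it transports the dense $\sigma$-compact (resp.\ separable) subspace of $E_w$ through $T^{-1}$ to a dense subspace $Y$ of $C_p(X)$, pushes $Y$ forward by the surjective restriction maps $\pi_n\colon C_p(X)\to C_p(X_n)$, and then invokes Arkhangel'skii's characterizations (\cite[Theorem IV.1.7]{Arch}: a compact $K$ is Eberlein iff $C_p(K)$ has a dense $\sigma$-compact subspace; \cite[Theorem I.1.5]{Arch}: $K$ is metrizable iff $C_p(K)$ is separable). You instead embed $X$ into $C_p(E_w)$ via the evaluation map composed with the dual of $T^{-1}$, restrict to a weakly dense $\sigma$-compact (resp.\ countable) set $D\subset E$, and apply the dual characterization that compact subspaces of $C_p(D)$ with $D$ $\sigma$-compact are Eberlein (resp.\ that compact subspaces of $\mathbb{R}^{\omega}$ are metrizable). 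The two routes are mirror images under $C_p$-duality and consume the same classical inputs; the paper's is marginally shorter since it needs no double-dual embedding and no injectivity-of-restriction argument, while yours makes explicit the slightly stronger byproduct that \emph{every} compact subset of $X$ is Eberlein (resp.\ metrizable) — a fact the paper's argument also yields but does not record. Your cautionary remark about taking the generating set absolutely convex (via Krein--\v{S}mulian) so that $\span(L)=\bigcup_n nL$ is genuinely the point where care is needed, and you handle it correctly; the paper sidesteps it by simply asserting that $E_w$ contains a dense $\sigma$-compact subspace. Parts (a) (the non-scattered clause) and (b) are, in both treatments, quoted directly from Theorem~\ref{theorem3}.
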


\begin{corollary}\label{cor:main2}
Let $E$ be a WCG (separable) Banach space.
If there exist an infinite compact space $X$ and a homeomorphism $T: C_p(X) \to E_{w}$, then 
$X$ is a non-scattered Eberlein (metrizable, respectively) compact space and $E$ contains a copy of $\ell_{1}$.
\end{corollary}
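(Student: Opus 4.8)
The plan is to deduce Corollary~\ref{cor:main2} from Corollary~\ref{cor:main1} by using the additional assumption that $X$ is compact. First I apply Corollary~\ref{cor:main1} to the infinite compact (hence infinite Tychonoff) space $X$ and the WCG Banach space $E$. This produces a representation $X=\bigcup_{n\in\omega}X_n$ in which each $X_n$ is an Eberlein compact subspace of $X$ (a metrizable compact subspace when $E$ is separable) and at least one $X_{n_0}$ is non-scattered, and it simultaneously yields that $E$ contains an isomorphic copy of $\ell_{1}$, which is the final assertion of the corollary. Since $X$ is Hausdorff, every $X_n$ is closed in $X$.

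The non-scatteredness of $X$ is immediate: subspaces of scattered spaces are scattered, so a space possessing the non-scattered subspace $X_{n_0}$ is itself non-scattered. The substantive point is to pass from ``$X$ is a countable union of closed Eberlein (resp.\ metrizable) compacta'' to ``$X$ is Eberlein (resp.\ metrizable)'', which is where compactness of the whole union is used. In the metrizable case the argument is elementary: each compact metrizable $X_n$ has a countable network, hence so does $X=\bigcup_nX_n$; since network weight and weight coincide for compact Hausdorff spaces, $w(X)\le\aleph_0$, and Urysohn's metrization theorem gives that $X$ is metrizable. (One can also bypass the decomposition here: if $E$ is separable then $E_w$, and therefore $C_p(X)$, is separable, which forces $X$ to admit a coarser second-countable Hausdorff topology; as $X$ is compact this coarser topology must coincide with the original one, so $X$ is second-countable compact, hence metrizable.)

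For the Eberlein conclusion the plan is to invoke the classical permanence property of the class of Eberlein compact spaces: a compact space which is the union of countably many of its closed Eberlein compact subspaces is again Eberlein compact (see, e.g., \cite{fabian}). Applied to $X=\bigcup_nX_n$ this immediately gives that $X$ is an Eberlein compact space, finishing the proof. Alternatively, one may reread the argument behind Corollary~\ref{cor:main1} in the case of compact $X$ and quote the same closure property at the end.

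I expect the only non-routine ingredient to be this permanence statement for Eberlein compacta. It is genuinely more delicate than the case of finite unions --- where $X_1\cup\dots\cup X_k$ is a continuous image of the Eberlein compact coproduct $X_1\sqcup\dots\sqcup X_k$, so that one concludes via the theorem that continuous images of Eberlein compacta are Eberlein --- because a countable coproduct of Eberlein compacta is no longer compact and its one-point compactification need not be Eberlein. The usual route proceeds through Rosenthal's characterization of Eberlein compacta by $\sigma$-point-finite, $T_0$-separating families of cozero sets, extended from the pieces $X_n$ to $X$; I would cite this rather than reproduce it, after which the corollary follows.
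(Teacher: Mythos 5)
Your treatment of the metrizable case, of non-scatteredness, and of the $\ell_1$ conclusion is fine; in fact your parenthetical ``bypass'' for the separable case (separability of $E_w$ forces separability of $C_p(X)$, hence $X$ condenses onto a second countable space, hence, being compact, is metrizable) is precisely the paper's intended argument, which is simply to rerun the proof of Corollary \ref{cor:main1} with $X$ itself in place of the pieces $X_n$.

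The Eberlein case, however, has a genuine gap. You reduce it to the assertion that a compact space which is the union of countably many closed Eberlein compact subspaces is itself Eberlein, and you propose to quote this as a ``classical permanence property (see, e.g., \cite{fabian})''. That statement is not in \cite{fabian}, and it is not a routine consequence of Rosenthal's characterization: given $\sigma$-point-finite $T_0$-separating families of cozero sets on the pieces $X_n$, Tietze extensions of the defining functions produce cozero sets of $X$ with the right traces, but there is no control whatsoever on point-finiteness of the extended families outside $X_n$, and points lying in different pieces still have to be separated. You correctly identify why the finite-union trick (continuous image of the coproduct) breaks down for countable unions, but you then replace it with a black box whose proof is at least as hard as the corollary itself; the question of which permanence properties countable unions of Eberlein compacta enjoy is in fact a delicate one that has been the subject of dedicated research, not a textbook fact. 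The gap is entirely avoidable, because the decomposition of Corollary \ref{cor:main1} is not needed here at all: since $E$ is WCG, $E_w$ contains a dense $\sigma$-compact subspace (take $\bigcup_n n\,\overline{\mathrm{conv}}(K\cup -K)$ for a generating weakly compact $K$, which is norm-dense and hence weakly dense), so its homeomorphic image under $T^{-1}$ is a dense $\sigma$-compact subspace of $C_p(X)$; as $X$ is compact, Arkhangel'skii's characterization \cite[Theorem IV.1.7]{Arch} --- the very result already invoked in the proof of Corollary \ref{cor:main1} --- applies directly to $X$ and yields that $X$ is Eberlein, with no union argument required.
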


Theorem \ref{theorem3} provides a generalization of \cite[Corollary 5.11, Theorem 5.12]{Krupski-2}, because
the Banach space $C(L)$ over a compact space $L$ contains an isomorphic copy of $\ell_{1}$ if and only if $L$ is non-scattered
(see \cite[Theorem 12.29]{fabian}). Note that our approach is different from that presented in \cite{Krupski-2};
we make use of the main result of \cite{okunev} and 
some standard arguments from general topology and functional analysis.

Necessary conditions in Theorem \ref{theorem3} are not sufficient,
because the Banach space $\ell_{1}$ is never homeomorphic to any space $C_p(X)$ (cf. Remark \ref{remark_l1}).
This and some other remarks related with the topic were mentioned earlier in \cite{KalMol}.

 Note that  M. Krupski and W. Marciszewski have asked already whether the spaces
$C_p(K)$ and $C(K)_w$ are homeomorphic for $K = [0,1]^{\omega}, K = \beta\mathbb{N}, K =\beta\mathbb{N} \setminus \mathbb{N}$
\cite[Questions 4.7 - 4.9]{Krupski-2}. 

Our notation is standard and follows the book \cite{En}. The closure of a set $A$ is denoted by $\overline{A}$.
The following very well-known notions play an important role in the proof of Theorem \ref{theorem3}.
A topological space $X$ is Fr\'echet-Urysohn if for each subset $A\subset X$ and each $x\in\overline{A}$ there exists a sequence $\{ x_{n}: n\in \omega\}$ in $A$ which converges to $x$.
 A subset $B$ of a topological vector space $E$ is \emph{bounded} in $E$ if for each neighbourhood of zero $U$  in $E$ there exists a scalar $\lambda$ such that $B \subset \lambda U$.
\section{Sequentially continuous linear mappings $T:C_p(X)\to E_w$}\label{section1}
 Let $X$ be a Tychonoff space. For a function $f:X\to \mathbb R$, we denote the support of $f$ by $$\supp(f)=\{t\in X: f(t) \neq 0\}.$$

In order to prove Theorem  \ref{Theor:general} we need an elementary lemma.
\begin{lemma}\label{lem1}
Let $X$ be an infinite  Tychonoff space and let $E$ be a Banach space. If  $T:C_p(X)\to E_w$ is a sequentially continuous linear operator,
 then for every  sequence $\{U_n: n\in \omega\}$  of pairwise disjoint nonempty open subsets of $X$ there exists $N$ such that  $T(f)=0$ for all $n\geqslant N$ and $f\in C(X)$ with $\supp(f)\subset U_n$.
\end{lemma}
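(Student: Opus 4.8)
The plan is to argue by contradiction: suppose there is a sequence $\{U_n : n \in \omega\}$ of pairwise disjoint nonempty open sets such that for infinitely many $n$ one can find $f_n \in C(X)$ with $\supp(f_n) \subset U_n$ and $T(f_n) \neq 0$. Passing to a subsequence, assume this holds for all $n$. For each such $n$, pick a point $x_n \in U_n$ with $f_n(x_n) \neq 0$; by rescaling $f_n$ (multiplying by a suitable positive constant) we may arrange that $\|T(f_n)\|$ is as large as we like — this is the crucial use of linearity, since $\supp$ is unchanged under scaling. The strategy is then to produce, from the $f_n$, a single convergent sequence in $C_p(X)$ whose image under $T$ fails to converge in $E_w$, contradicting sequential continuity.

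The key construction is as follows. Because the $U_n$ are pairwise disjoint, the partial sums $g_k = \sum_{n \le k} f_n$ are well-defined continuous functions, and more importantly, for a fixed point $t \in X$ at most one term $f_n(t)$ is nonzero, so the ``tail'' functions $h_k = \sum_{n \ge k} f_n$ (if convergent pointwise — which they are, since at each point only finitely many, in fact at most one, summand is nonzero) are continuous. In particular $h_k \to 0$ in $C_p(X)$ as $k \to \infty$, because for each fixed $t$ we have $h_k(t) = 0$ for all $k$ larger than the unique index with $t \in \supp(f_{n}) \cap U_n$ (or $h_k(t)=0$ for all $k$ if $t$ lies in no support). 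Now I would normalize: replace $f_n$ by $c_n f_n$ with $c_n > 0$ chosen so that $\|T(f_n)\| \ge 1$ for every $n$; the tail functions $h_k = \sum_{n\ge k} c_n f_n$ still converge to $0$ in $C_p(X)$ for the same pointwise reason. By linearity, $T(h_k) - T(h_{k+1}) = T(f_k)$ has norm at least $1$; but if $T$ is sequentially continuous then $T(h_k) \to T(0) = 0$ in $E_w$, hence in particular $T(h_k)$ is a weakly convergent, thus weakly bounded, thus norm-bounded sequence. That alone is not yet a contradiction, so one must extract more.

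To finish I would exploit that a weakly convergent sequence is weakly Cauchy, and combine it with a ``gliding hump'' / Rosenthal-type selection: since $\|T(f_k)\| = \|T(h_k) - T(h_{k+1})\| \ge 1$ while $T(h_k) \to 0$ weakly, choose functionals $\varphi_k \in E'$, $\|\varphi_k\| \le 1$, with $\varphi_k(T(f_k)) \ge 1$, and build a subsequence and a function in $C_p(X)$ on which $T$ oscillates. Concretely, form $\tilde h = \sum_k \varepsilon_k c_k f_k$ for a rapidly chosen sign/coefficient pattern: the cleanest route is to note that the map $k \mapsto T(h_k)$ being weakly null forces, for any strictly increasing $(k_j)$, the telescoping $\sum_j T(f_{k_j}) $-type partial sums to behave badly, yet the corresponding $C_p$-functions $\sum_{j \ge m} c_{k_j} f_{k_j}$ still converge pointwise to $0$, so their $T$-images must converge weakly to $0$ — contradicting that consecutive differences have norm $\ge 1$ and, say, $\varphi_{k_j}$-value $\ge 1$ while $\varphi_{k_j}$ of the tail is controlled. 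The main obstacle is precisely this last extraction: ``weakly null with norm-$1$ increments'' is not by itself contradictory in a general Banach space, so the argument must genuinely use that we may choose the coefficients $c_n$ freely (linearity) to defeat any would-be weak convergence — the disjoint-support trick guarantees every such rescaled tail series is still pointwise convergent and hence a legitimate element of $C_p(X)$, so the flexibility is entirely on the $C_p$ side and sequential continuity of $T$ must break. Once a single convergent sequence in $C_p(X)$ with non-convergent image is exhibited, the lemma follows.
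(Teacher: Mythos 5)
Your proposal has a genuine gap --- in fact two. First, the tail functions $h_k=\sum_{n\geqslant k}f_n$ need not be continuous: pointwise convergence of a disjointly supported series does not give continuity of the sum. Take $X=[0,1]$, $U_n=(\tfrac{1}{n+1},\tfrac{1}{n})$, and let $f_n$ be a bump of height $1$ vanishing off $U_n$; then $\sum_n f_n$ equals $0$ at the origin but equals $1$ at points accumulating to the origin. Since in the contradiction argument the $f_n$ are handed to you (only scalar rescalings are at your disposal, and you need those scalars bounded below to keep $\|T(c_nf_n)\|\geqslant 1$), you cannot repair this, so the elements $h_k$ on which you want to evaluate $T$ may simply fail to lie in $C_p(X)$. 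Second, as you yourself concede, even granting the $h_k$ the endgame does not close: a weakly null sequence whose consecutive differences all have norm $\geqslant 1$ exists in any infinite-dimensional Banach space (e.g.\ the unit vector basis of $\ell_2$), so ``weakly null with norm-one increments'' yields no contradiction, and the promised Rosenthal-type extraction is never actually carried out.

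The missing idea is to use the rescaling freedom to push the image norms to infinity rather than merely bound them below; then no sums are needed at all. Replace $f_n$ by $n f_n/\|T(f_n)\|$. The resulting sequence of single functions (not tails) converges to $0$ in $C_p(X)$ by exactly the disjoint-support observation you made: each fixed $t\in X$ lies in the support of at most one $f_n$, so all but at most one term of the sequence vanishes at $t$, regardless of the scaling. Sequential continuity would then make $\bigl\{n\,T(f_n)/\|T(f_n)\| : n\in\omega\bigr\}$ weakly convergent, hence weakly bounded, hence norm-bounded by the uniform boundedness principle --- but its $n$-th term has norm $n$. This one-line contradiction is the paper's entire proof of the lemma.
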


\begin{proof}
On the contrary, suppose we can find a  strictly increasing sequence $\{k_n: n\in \omega\}$ of natural numbers and a sequence $\{f_n: n\in \omega\} \subset C(X)$ such that
$\supp(f_n)\subset U_{k_n}$ and $\|T(f_n)\| > 0$ for every $n$. Then the sequence $\Big\{n \frac{f_n}{\|T(f_n)\|}: n \in \omega \Big\}$ pointwise converges to zero, but the sequence of images
$$\Big\{n \frac{T(f_n)}{\|T(f_n)\|}: n \in \omega \Big\}$$ 
is not bounded in $E$.  We arrive at a contradiction with the facts that the families of bounded sets in $E_w$ and $E$ coincide 
and sequentially continuous linear operators map bounded sets into bounded sets.
\end{proof}

\begin{proof}[Proof of Theorem \ref{Theor:general}]\mbox{}

{\bf First step}: $X$ is a compact space.

Denote by $A$ the set of all points $t$ in $X$ such that there exists an open neighbourhood $U$ of $t$ in $X$
such that $T(f)=0$ for every $f\in C(X)$ satisfying the property $\supp(f)\subset U$. Evidently, the set $A$ is open.
We consider the following three cases:
\begin{enumerate}
\item The set $X\setminus A$ is infinite;
\item  The set $X\setminus A$ is empty;
\item  The set $X\setminus A$ is finite and nonempty.
\end{enumerate}

\medskip
\emph{Suppose that the item  (1) holds}.
\medskip

Since $X\setminus A$ is infinite, we can find a sequence $\{t_n: n\in \omega\} \subset X\setminus A$ and a sequence $\{U_n: n\in \omega\}$
 of pairwise disjoint open subsets of $X$ such that $t_n\in U_n$.
Then we choose $f_n\in C(X)$ such that $\supp(f_n)\subset U_n$ and $T(f_n) \neq 0$ for every $n$. This contradicts Lemma~\ref{lem1}.

\medskip
\emph{Suppose that the item (2) holds}.
\medskip

Let us fix $f\in C(X)$ for a moment.
 For every $t\in X$, we can find an open neighbourhood $U_{t}$ of $t$ in $X$ such that $T(g)=0$ for every $g\in C(X)$ satisfying $\supp(g)\subset U_{t}$.
 Since $X$ is a compact space, we choose a finite set  $\{t_{1},\ldots,t_{N}\}\subset X$ such that $X=\bigcup _{k=1}^{N} U_{t_{k}}$. 
Now we take the partition of unity subordinated to the open finite cover $\{U_{t_{k}}\}_{k=1}^{N}$ \cite[p. 300]{En}: 
  the functions $\{g_{1}\ldots,g_{N}\}\subset C(X)$ such that  $$\supp(g_{k})\subset U_{t_{k}}, \,\, g_{k}(X)\subset [0,1], \,\, \sum_{j=1}^{N} g_{j}(t)=1$$
for all $1\leqslant k\leqslant N$ and  $t\in X$. Note that $\supp(f g_{k})\subset U_{t_k}$ for every $k$, therefore,
$$T(f)=T\bigl(f\bigl(\sum_{k=1}^{N} g_{k}\bigr)\bigr)=\sum_{k=1}^{N} T\bigl(fg_{k}\bigr)=0.$$  Consequently,
$T(f)=0$ for every $f\in C(X)$, i.e. the range of $T$ is trivial.

\medskip
\emph{Suppose that the item  (3) holds.}
\medskip

Since $X\setminus A$ is finite, there exists a continuous linear extension operator $L:C_p(X\setminus A)\to C_p(X)$ such that
$L(f)|_{X\setminus A}=f$ for every $f\in C(X\setminus A)$.
Let us fix  $f\in C(X)$ for a moment.
For every $n$, we can find open sets $V_n$ and $W_n$  such that $$X\setminus A\subset V_n\subset  \overline{V_n}\subset W_n$$ and $$\bigl|\bigl(f-L(f|_{X\setminus A})\bigr)(t)\bigr|<\tfrac{1}{n}$$ for every $t\in W_n$.  For every $t\in X\setminus W_n$, we choose an open  neighbourhood $U_{t,n}$ of $t$ such that $U_{t,n}\subset X\setminus \overline{V_n}$ and 
 $T(g)=0$ for every $g\in C(X)$ satisfying $\supp(g)\subset U_{t,n}$. Since $X\setminus W_n$ is a compact space, we find a finite set $$\{t_{1,n},\ldots,t_{N_n,n}\}\subset X\setminus W_n$$ 
such that $$X\setminus W_n\subset \bigcup _{k=1}^{N_n} U_{t_{k,n}} \subset X\setminus \overline{V_n}.$$ 
  By standard arguments, we find a partition of unity $\{g_{1,n},\ldots,g_{N_n,n}\}\subset C(X)$ such that
$$\supp(g_{k,n})\subset U_{t_{k,n}},\,\,\,  g_{k,n}(X)\subset [0,1],\,\,\, \sum_{j=1}^{N_n} g_{j,n}(t)=1, \,\,\sum_{j=1}^{N_n} g_{j,n}(s)\leqslant 1 $$
for all $n$,  $1\leqslant k\leqslant N_n$, $t\in X\setminus W_n$ and $s\in X$. Therefore,
$$\biggl\|\bigl(f-L(f|_{X\setminus A})\bigr)-\bigl(f-L(f|_{X\setminus A})\bigr)\bigl(\sum_{k=1}^{N_n} g_{k,n}\bigr)\biggr\|\leqslant \tfrac{1}{n}$$ and  
  $$T\biggl(\bigl(f-L(f|_{X\setminus A})\bigr)\biggl(\sum_{k=1}^{N_n} g_{k,n}\biggr)\biggr)=
\sum_{k=1}^{N_n} T\bigl(\bigl(f-L(f|_{X\setminus A})\bigr)g_{k,n}\bigr)=0$$
for every $n$.  It is clear that the sequence $$\Big\{\bigl(f-L(f|_{X\setminus A})\bigr)\bigl(\sum_{k=1}^{N_n} g_{k,n}\bigr):n\in\omega \Big\}$$ converges to
$\bigl(f-L(f|_{X\setminus A})\bigr)$ in $C_p(X)$. Finally we deduce that
$$T(f)=T\bigl(L(f|_{X\setminus A})\bigr)$$
 for every $f\in C(X)$, i.e. the dimension of the range of $T$ does not exceed the finite size of $X\setminus A$.

{\bf Second step}: $X$ is any Tychonoff space.

Denote by $C^{*}_p(X)$ the linear subspace of $C_p(X)$ consisting of all bounded continuous functions on $X$.
Recall a well known fact that $C^{*}_p(X) $ is sequentially dense in $C_p(X)$. Indeed, if $f$ is any function in $C(X)$, then for each natural $n$ define $f_n \in C^{*}_p(X)$
by the rule: $f_n(x) = f(x)$ if $|f(x)| \leq n$; $f_n(x) = n$ if $f(x) \geq n$; $f_n(x) = -n$ if $f(x) \leq -n$. Clearly, $\{f_n: n\in\omega \} \subset C^{*}_p(X)$ 
and the sequence $\{f_n: n\in\omega \}$ pointwise converges to $f$.

Every function from $C^{*}_p(X)$ uniquely extends to a function from $C(\beta X)$, where $\beta X$ is the Stone-\v{C}ech compactification of $X$. Denote by $\pi$ the linear continuous operator
of restriction: $\pi: C_p(\beta X) \to C_p(X)$. The range of $\pi$ is the linear space $C^{*}_p(X)$. Now we consider the composition
$$T\circ \pi: C_p(\beta X) \to E_w.$$
Let $C$ be the range of the operator $T\circ \pi$.
Since $T\circ \pi$ is a sequentially continuous linear operator, and $\beta X$ is compact,  the range $C$ is finite-dimensional by the first step.
But $C$ coincides with the image $T(C^{*}_p(X))$. Since $C^{*}_p(X)$ is sequentially dense in $C_p(X)$ and $T$ is sequentially continuous, we get that $C$ is dense in the range of the operator $T$.
However, $C$ is finite-dimensional, hence complete, finally we conclude that the whole range of $T$ coincides with the finite-dimensional linear space $C$.
\end{proof}

\section{When $C_p(X)$ and $E_w$ are homeomorphic?}\label{section2}

Let $S$ be the convergent sequence, that is, the space homeomorphic to 
$\{0\} \cup \{\frac{1}{n}: n \in \mathbb{N}\}$.
It is known that for every compact metrizable space $X$
there exists a continuous surjection $T: C_{p}(S) \to C_{p}(X)$ \cite[Remark 3.4]{Kawamura}.
To keep our  paper self-contained we recall the argument in the proof of Proposition  \ref{prop2} below.
A topological space $X$ is called {\it analytic} if $X$ is a continuous image of the space $\mathbb{N}^{\mathbb{N}}$, 
which is in turn homeomorphic to the space of irrationals $J \subset \mathbb{R}$ (see e.g. \cite{analytic_sets}).

\begin{proposition}\label{prop2}
A locally convex space $E$ is analytic if and only if $E$ is a continuous image of the space $C_{p}(S)$.
\end{proposition}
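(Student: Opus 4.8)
The plan is to prove both implications separately. For the "only if" direction, suppose $E$ is analytic. Since $C_p(S)$ is a continuous image of $\mathbb{N}^{\mathbb{N}}$ — indeed $S$ is a compact metrizable space, so by the cited result of Kawamura--Leiderman (or directly, since $C_p(S) \cong \mathbb{R} \times c_0(\mathbb{N})_p$ is a continuous image of $\mathbb{R}^{\mathbb{N}}$, which is a continuous image of $\mathbb{N}^{\mathbb{N}}$) — and $E$ analytic means $E$ is a continuous image of $\mathbb{N}^{\mathbb{N}}$, it suffices to produce a continuous surjection $C_p(S) \to E$. The key point here is that $\mathbb{N}^{\mathbb{N}}$ is itself a continuous image of $C_p(S)$: we can realize a closed copy of $\mathbb{N}^{\mathbb{N}}$ inside $C_p(S)$ (for instance, as a suitable closed subset of functions taking integer values at the points $1/n$), and then extend the continuous surjection onto $E$ from this closed subset to all of $C_p(S)$ using a retraction — but $C_p(S)$ is not a retract-friendly space directly, so instead I would argue via the linear structure: one builds a continuous \emph{linear} surjection $C_p(S) \to \mathbb{R}^{\mathbb{N}}$ (restriction/evaluation map $f \mapsto (f(1/n))_n$), compose with the Tietze-style fact that $\mathbb{R}^{\mathbb{N}}$ continuously surjects onto $\mathbb{N}^{\mathbb{N}}$, and then compose with any continuous surjection $\mathbb{N}^{\mathbb{N}} \to E$ witnessing analyticity of $E$. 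This gives the composite continuous surjection $C_p(S) \to E$.

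For the "if" direction, suppose $E = h(C_p(S))$ for some continuous surjection $h$. It suffices to show $C_p(S)$ is analytic, since a continuous image of an analytic space is analytic. Now $S = \{0\} \cup \{1/n : n \in \mathbb{N}\}$ is countable, so $C_p(S) \subseteq \mathbb{R}^S = \mathbb{R}^{\omega}$ is a subspace of a Polish space; in fact $C(S)$ is exactly the set of sequences $(a_n)$ (indexing $a_0 = f(0)$, $a_n = f(1/n)$) with $a_n \to a_0$, which is a Borel (indeed $F_{\sigma\delta}$, or one checks it is $\mathbf{\Pi}^0_3$) subset of $\mathbb{R}^{\omega}$. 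Hence $C_p(S)$ is a Borel subset of a Polish space, therefore Polish-analytic, hence analytic, and so is its continuous image $E$.

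The main obstacle, and the step deserving the most care, is the "only if" direction: producing the continuous surjection $C_p(S) \to E$. The cleanest route is the chain $C_p(S) \xrightarrow{\text{restriction}} \mathbb{R}^{\omega} \xrightarrow{\text{(Tietze/roof map)}} \mathbb{N}^{\mathbb{N}} \xrightarrow{\text{analyticity}} E$, but one must verify that the restriction map $C_p(S) \to \mathbb{R}^{\omega}$, $f \mapsto (f(1/n))_{n}$, is \emph{surjective}: given any real sequence $(b_n)$, there is a continuous function on $S$ taking the value $b_n$ at $1/n$ (no convergence constraint, since we only prescribe values at the isolated points $1/n$ and are free to set $f(0)$ arbitrarily — wait, continuity at $0$ forces $f(1/n) \to f(0)$). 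This is the subtle point: the restriction to the \emph{isolated} points is not surjective onto all of $\mathbb{R}^{\omega}$. So instead I would use the map $C_p(S) \to \mathbb{R}^{\omega}$ given by $f \mapsto (f(1/1)-f(1/2),\, f(1/2)-f(1/3),\, \ldots)$, or more simply observe $C_p(S) \cong \mathbb{R} \times (c_0)_p$ where $(c_0)_p$ is $c_0$ with the pointwise topology, and that $(c_0)_p$ admits a continuous linear surjection onto $\mathbb{R}^{\omega}$ — or, most robustly, invoke directly Kawamura--Leiderman: there is a continuous surjection $C_p(S) \to C_p([0,1])$, and $C_p([0,1])$ contains a closed copy of $\mathbb{N}^{\mathbb{N}}$ that is moreover a retract, whence $C_p(S)$ surjects onto $\mathbb{N}^{\mathbb{N}}$ and thus onto $E$. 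I expect the write-up to pin down one such concrete surjection onto $\mathbb{N}^{\mathbb{N}}$ and then compose.
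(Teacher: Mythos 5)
Your ``if'' direction (a continuous image of $C_p(S)$ is analytic) is fine and matches the paper: $C_p(S)$ is a Borel (indeed $K_{\sigma\delta}$) subset of the Polish space $\mathbb{R}^{S}$, hence analytic, and analyticity passes to continuous images. The other direction, however, contains a fatal structural error: every route you propose factors through a continuous \emph{surjection} of $C_p(S)$ (or of $\mathbb{R}^{\omega}$) onto $\mathbb{N}^{\mathbb{N}}$, and no such surjection can exist, because $C_p(S)$ and $\mathbb{R}^{\omega}$ are connected while $\mathbb{N}^{\mathbb{N}}$ is totally disconnected with more than one point. For the same reason a closed copy of $\mathbb{N}^{\mathbb{N}}$ inside $C_p([0,1])$ can never be a retract (a retract of a connected space is connected). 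Your intermediate claim is also false on independent grounds: there is no continuous linear surjection of $C_p(S)$ (or of $(c_0)_p$) onto $\mathbb{R}^{\omega}$, since $C(S)=\bigcup_n nB_{C(S)}$ is a countable union of pointwise-bounded sets, continuous linear maps preserve boundedness, and $\mathbb{R}^{\omega}$ is not a countable union of bounded sets (each such set is nowhere dense; Baire). You correctly sensed trouble with the evaluation map at the isolated points, but the repair you suggest (differences, or $(c_0)_p\to\mathbb{R}^{\omega}$) fails for the same reason.

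The missing idea is that one should not try to map \emph{onto} $\mathbb{N}^{\mathbb{N}}$ but rather to find a closed copy $J$ of $\mathbb{N}^{\mathbb{N}}$ \emph{inside} $C_p(S)$ and then \emph{extend} a map defined on $J$. This is what the paper does: $C_p(S)$ is $K_{\sigma\delta}$ in $\mathbb{R}^{S}$ but not $K_{\sigma}$, so by the Hurewicz theorem it contains a closed copy $J$ of the irrationals; fix a continuous surjection $\varphi: J\to E$ witnessing analyticity of $E$, and apply the Dugundji extension theorem (valid because $C_p(S)$ is metrizable and $E$ is locally convex) to extend $\varphi$ to a continuous map $T:C_p(S)\to E$ with range contained in the convex hull of $\varphi(J)=E$, hence equal to $E$. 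The extension is not of the form $\varphi\circ r$ for a retraction $r$, which is exactly how the connectedness obstruction is avoided; note also that this is where the hypothesis that $E$ is locally convex is actually used.
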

\begin{proof}
Assume first that such a continuous mapping from $C_{p}(S)$ onto $E$ exists. Since $C_{p}(S)$ is separble and metrizable and $K_{\sigma\delta}$, hence analytic, the image $E$ is analytic as well.
 Conversely, assume that $E$ is analytic. We fix a continuous surjection $\varphi: \mathbb{N}^{\mathbb{N}}\rightarrow E$.
The space $C_p(S)$ is a $K_{\sigma\delta}$-subset of $\mathbb{R}^S$ but not $K_{\sigma}$. 
Hence, from the Hurewicz theorem (see \cite[Theorem 3.5.4]{analytic_sets}) it follows that $C_p(S)$ contains a closed copy $J$ of the space of irrationals.
Now we apply the classic Dugundji extension theorem (see \cite[Theorem 2.2]{Mill}), to get a continuous surjective mapping $T: C_p(S)\rightarrow E$ extending the mappping $\varphi$.
\end{proof}

\begin{corollary}\label{cor2} Let $E$ be a Banach space.
Then there exists a continuous surjective mapping $T: C_p(S) \rightarrow E_w$ if and only if $E$ is separable.
\end{corollary}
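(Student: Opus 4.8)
The plan is to deduce this from Proposition \ref{prop2} together with the observation that analyticity is a property that does not distinguish a Banach space from its weak modification. First I would record the easy direction: if $T\colon C_p(S)\to E_w$ is a continuous surjection, then since $C_p(S)$ is separable and metrizable, $E_w$ is a continuous image of a separable space, hence separable in the weak topology. But a Banach space is separable in its weak topology if and only if it is separable in its norm topology (the weak and norm closures of a linear subspace coincide by Mazur's theorem, so a norm-dense countable set is weakly dense), so $E$ is separable.

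For the converse, suppose $E$ is separable. The key point is that $E_w$ is then analytic. One way to see this: a separable Banach space embeds isometrically into $C[0,1]$, and more to the point, on a separable Banach space the weak topology restricted to any bounded set is metrizable (since the dual unit ball is $w^*$-metrizable and separable), so $E_w = \bigcup_n nB_E$ is a countable union of separable metrizable subspaces; each closed ball $nB_E$ is $w$-closed, and in fact $E_w$ is homeomorphic to a subspace of $(B_{E^{**}}, w^*)^{\mathbb N}$-type product, which is Polish, realizing $E_w$ as an $F_{\sigma\delta}$ and hence absolutely analytic subset. Rather than belabor this, I would simply invoke that $E_w$ is analytic for every separable Banach space $E$ — this is standard (e.g.\ it is a $K_{\sigma\delta}$ space, being a countable union of the weakly compact-like pieces is false, but it is Lindelöf and embeds in a Polish space via the canonical map into $\mathbb R^{D}$ for $D$ a countable norming subset of $E'$, whose image is Borel). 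Once $E_w$ is known to be analytic, Proposition \ref{prop2} applied to the locally convex space $E_w$ yields a continuous surjection $C_p(S)\to E_w$, which is exactly what we want.

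The main obstacle is pinning down cleanly why $E_w$ is analytic when $E$ is separable, since Proposition \ref{prop2} is stated for locally convex spaces and we must verify its hypothesis for $E_w$ specifically. The cleanest route: fix a countable set $\{f_n\}\subset E'$ that is norming (separating points of $E$ and with $\|x\| = \sup_n |f_n(x)|$ after normalizing, using separability of $E$), and consider the continuous injection $j\colon E_w \to \mathbb R^{\mathbb N}$, $x\mapsto (f_n(x))_n$. This is a homeomorphic embedding onto its image (the $f_n$ generate the weak topology), and the image $j(E_w) = \bigcup_k j(kB_E)$ where each $j(kB_E)$ is compact (it is the continuous image under $j$ of $(kB_E, w)$, and $kB_E$ is $w$-compact precisely when... no — that requires reflexivity). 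So instead: $j(kB_E)$ is not compact in general, but $j(E_w)$ is still Borel in $\mathbb R^{\mathbb N}$ — it is $\{(t_n) : \sup_n |t_n| < \infty$ and $(t_n)$ is in the $w^*$-closed image$\}$, which one checks is $F_{\sigma\delta}$. Being a Borel subset of a Polish space, $j(E_w)$ is analytic, hence $E_w$ is analytic. Then Proposition \ref{prop2} finishes the proof. I would present the forward direction in one line and devote the body of the argument to constructing $j$ and identifying the Borel class of its image.
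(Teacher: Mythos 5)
Your forward direction is fine and matches the spirit of the paper's (the paper deduces separability of $E_w$ from analyticity via Proposition \ref{prop2}, you deduce it directly from separability of $C_p(S)$ plus Mazur; both work). The problem is the converse, where your justification that $E_w$ is analytic contains two genuine errors. First, the claim that on a separable Banach space the weak topology restricted to bounded sets is metrizable is false in general: $w^*$-metrizability of $B_{E'}$ follows from separability of $E$, but metrizability of $(B_E,w)$ requires norm-separability of $E'$ (it fails, e.g., for $E=\ell_1$). Second, and more seriously for your argument, the map $j\colon E_w\to\mathbb{R}^{\mathbb N}$ built from a countable norming family $\{f_n\}\subset E'$ is \emph{not} a homeomorphic embedding: it is continuous and injective, but a countable subfamily of $E'$ does not generate the weak topology $\sigma(E,E')$, so $j^{-1}$ need not be continuous on $j(E)$. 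Hence even if you identified the Borel class of $j(E)$ in $\mathbb{R}^{\mathbb N}$, that would only show analyticity of $(E,\sigma(E,\operatorname{span}\{f_n\}))$, not of $E_w$.

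The conclusion you want is true, and the paper obtains it in one line: when $E$ is separable, $(E,\|\cdot\|)$ is a Polish space, the identity map $(E,\|\cdot\|)\to E_w$ is continuous, and a continuous image of a (nonempty) Polish space is a continuous image of $\mathbb{N}^{\mathbb N}$, i.e.\ analytic. This bypasses all questions about embeddings into $\mathbb{R}^{\mathbb N}$ and Borel classes. With that substitution your proof outline becomes correct and coincides with the paper's: apply Proposition \ref{prop2} to the analytic locally convex space $E_w$.
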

\begin{proof} If $E$ is a separable Banach space, then $E_w$ is analytic as a continuous image of a Polish space $E$.
It follows from Proposition \ref{prop2} that there exists a continuous surjection $T: C_p(S) \rightarrow E_w$.
Conversely, analytic space is separable, hence $E_w$ and $E$ are separable as well.
\end{proof}

Let $X$ be an infinite Tychonoff space, and $E$ be a Banach space.
For every continuous mapping $T:C_p(X)\to E_{w}$ we define the set $B(T)$ as follows: $B(T)$ consists of all points $t$ in $X$ such that there exists an open neighbourhood $U$ of $t$ in $X$ with the property
$$\sup\{\|T(f)\|:f\in C(X),\supp(f)\subset U\} < \infty.$$
Evidently, the set $B(T)$ is open.

The following lemma  will be used below.
\begin{lemma}\label{lem3}
Let $X$ be an infinite Tychonoff space and let $E$ be a Banach space. If  $T:C_p(X)\to E_w$ is a sequentially continuous map, then $X\setminus B(T)$  is finite.
\end{lemma}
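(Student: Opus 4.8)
\textbf{Plan for the proof of Lemma \ref{lem3}.}
The plan is to mimic the argument of Lemma \ref{lem1}, replacing the linearity of $T$ by the observation that the relevant estimates only involve functions with pairwise disjoint supports, for which $T$ still interacts nicely with pointwise convergence. First I would argue by contradiction: if $X\setminus B(T)$ were infinite, then, since $X$ is Tychonoff and $X\setminus B(T)$ is infinite, I could pick a sequence $\{t_n:n\in\omega\}$ of distinct points in $X\setminus B(T)$ together with a sequence $\{U_n:n\in\omega\}$ of pairwise disjoint open sets with $t_n\in U_n$. (If $X\setminus B(T)$ has an accumulation point inside itself one can do this directly; if it is closed and discrete it is C-embedded up to choosing disjoint neighbourhoods, and in either case the construction of a discrete-in-itself subsequence with disjoint open neighbourhoods is routine point-set topology.) Since each $t_n\notin B(T)$, for every $n$ and every open $V\ni t_n$ with $V\subset U_n$ the quantity $\sup\{\|T(f)\|:\supp(f)\subset V\}$ is infinite; in particular there is $f_n\in C(X)$ with $\supp(f_n)\subset U_n$ and $\|T(f_n)\|>n\cdot\max(1,\|T(0)\|+1)$ — more to the point, $\|T(f_n)\|$ can be made as large as we wish.

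The key point is then to combine the $f_n$ into a single pointwise-convergent sequence whose images are unbounded, contradicting that $T$ is sequentially continuous (hence maps convergent sequences, which are bounded, to bounded sets, and bounded sets in $E_w$ coincide with bounded sets in $E$). Because the $U_n$ are pairwise disjoint, for each fixed $x\in X$ at most one $f_n$ is nonzero at $x$; hence the sequence $\{g_n\}$ defined by $g_n=f_n$ (or, if one prefers to reach zero, suitable scalar multiples) converges pointwise: $g_n\to 0$ if we arrange $\sup\{|f_n(x)|:x\}\to 0$ by rescaling, or more simply we consider for each $n$ the function $h_n$ that equals $f_n$ on $U_n$ and $0$ elsewhere and note any such family with disjoint supports and uniformly bounded sup-norms, chosen so that the nonzero "block" moves off to infinity, converges pointwise to $0$. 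Applying sequential continuity of $T$ to this null sequence forces $\{T(h_n)\}$ to converge (to $T(0)$), hence to be bounded in $E_w$ and therefore in $E$; but $\|T(h_n)\|=\|T(f_n)\|\to\infty$ by construction. This contradiction finishes the proof.

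The one subtlety I would be careful about — and the step I expect to be the main obstacle — is producing the pointwise-null sequence of functions with disjoint supports \emph{while keeping the norms of the images large}. Unlike in Lemma \ref{lem1}, here $T$ is not linear, so I cannot freely multiply $f_n$ by scalars $n/\|T(f_n)\|$: rescaling $f_n$ changes $T(f_n)$ unpredictably. Instead I would exploit that $t_n\notin B(T)$ directly: inside the \emph{same} open set $U_n$ and indeed inside arbitrarily small neighbourhoods of $t_n$ I can find $f_n$ with $\supp(f_n)$ as small as I like (contained in a prescribed shrinking neighbourhood basis at $t_n$) and $\|T(f_n)\|>n$. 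Choosing $\supp(f_n)$ to shrink means I can also keep $\|f_n\|_\infty$ under control, or even force $f_n\to 0$ pointwise on all of $X$ (each $x$ is eventually outside $\supp(f_n)$ once the supports have shrunk past it, using disjointness of the $U_n$ to handle $x$ near no $t_n$, and shrinking supports to handle $x$ near some $t_m$). That $\{f_n\}$ is the desired null sequence with $\|T(f_n)\|\to\infty$, contradicting sequential continuity of $T$ into $E_w$; hence $X\setminus B(T)$ is finite.
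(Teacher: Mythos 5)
Your proposal is correct and follows essentially the same route as the paper: assume $X\setminus B(T)$ is infinite, extract points $t_n\notin B(T)$ with pairwise disjoint open neighbourhoods $U_n$ (the paper proves this extraction in detail by induction using regularity, where you call it routine), use the definition of $B(T)$ to pick $f_n$ with $\supp(f_n)\subset U_n$ and $\|T(f_n)\|\to\infty$, and contradict sequential continuity plus the coincidence of bounded sets in $E$ and $E_w$. The only remark is that your worry about forcing pointwise convergence (rescaling, shrinking supports) is unnecessary: as you yourself observe, disjointness of the supports already makes the sequence $\{f_n\}$ eventually zero at every point, which is exactly what the paper uses.
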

\begin{proof}
 On the contrary, suppose that the set $X\setminus B(T)$ is infinite.

 \emph{Claim:
 There exist a sequence $\{t_n: n\in\omega\}\subset X\setminus B(T)$ and a sequence $\{U_n: n\in\omega\}$
 of pairwise disjoint open subsets of $X$ such that $t_n\in U_n$ for each natural $n$.}

 Indeed,  let us take any $s_1,s_2\in X\setminus B(T)$ such that $s_1\ne s_2$.   We find disjoint open neighbourhoods $V_1$ and $V_2$ of $s_1$ and $s_2$, respectively.
By the regularity of $X$, we find an open set $W_1$ such that $s_1\in W_1\subset \overline{W_1}\subset V_1$. At least one of the sets $(X\setminus B(T))\cap V_1$ and $(X\setminus B(T))\cap (X\setminus \overline{W_1})$ is infinite. If  the set $(X\setminus B(T))\cap V_1$ is infinite, we put $t_1=s_2$,  $U_1=V_2$ and $A_1=V_1$.
     If  the set $(X\setminus B(T))\cap V_1$ is finite and the set $$(X\setminus B(T))\cap (X\setminus \overline{W})$$ is infinite, we put $t_1=s_1$,
      $U_{1}=W_1$ and $A_1=X\setminus \overline{W_1}$.

          Suppose that for some natural $n$ we can find open sets $A_n, U_1,\ldots, U_n$ and points  $t_1,\ldots,t_n\in
     X\setminus B(T)$ such that the set  $(X\setminus B(T))\cap A_n$ is infinite,  $t_k\in U_k$, $A_n\cap U_k=\emptyset$ and  $U_m\cap U_j=\emptyset$ for all $1\leqslant k\leqslant n$ and $1\leqslant m<j\leqslant n$.
     We take any $$s_{2n+1},s_{2n+2}\in (X\setminus B(T))\cap A_n$$ such that $s_{2n+1}\ne s_{2n+2}$.   We have disjoint open neighbourhoods $V_{2n+1}$ and $V_{2n+2}$ of $s_{2n+1}$ and $s_{2n+2}$, respectively.

    By the regularity of $X$, we find an open set $W_{n+1}$ such that $$s_{2n+1}\in W_{n+1}\subset \overline{W_{n+1}}\subset A_n\cap V_{2n+1}.$$ 
		At least one of the sets $$(X\setminus B(T))\cap A_n\cap V_{2n+1}$$ and $$(X\setminus B(T))\cap A_n\cap (X\setminus \overline{W_{n+1}})$$ is infinite.
     If  the set $$(X\setminus B(T))\cap A_n\cap V_{2n+1}$$ is infinite, we put $t_{n+1}=s_{2n+2}$,  $U_{n+1}=V_{2n+2}$ and $A_{n+1}=A_n\cap V_{2n+1}$.
          If  the set $$(X\setminus B(T))\cap A_n\cap V_{2n+1}$$ is finite and the set $$(X\setminus B(T))\cap A_n \cap (X\setminus \overline{W_{n+1}})$$ is infinite,
					we put $t_{n+1}=s_{2n+1}$, $U_{n+1}=W_{n+1}$ and $A_{n+1}=A_n \cap (X\setminus \overline{W_{n+1}})$. An appeal to the mathematical induction completes the proof of the Claim.

         For every natural $n$, we find $f_n\in C_p(X)$ such that $\supp(f_n)\subset U_n$ and $\|T(f_{n+1})\|>\|T(f_n)\|+1$.
Then the sequence $\{f_n: n\in\omega\}$ pointwise converges to zero, hence $\{T(f_{n}): n\in\omega\}$ converges in $E_{w}$, but the sequence
$\{\|T(f_n)\|: n\in\omega\}$ is not bounded in $E$.  We arrive at a contradiction with the facts that the families of bounded sets in $E_w$ and $E$ coincide 
and sequentially continuous functions map convergent sequences into convergent sequences.
\end{proof}

The above results apply to get the following
\begin{proposition}\label{prop1}
Let $S$ be the convergent sequence and  let $E$ be an infinite-dimensional  separable Banach space. 
Then there is no a continuous linear surjection $T: C_{p}(S)\rightarrow E_w$ 
but  there  exists a continuous (non-linear) surjection $T: C_{p}(S)\rightarrow E_w$ such that $S\setminus B(T)$ is finite.
\end{proposition}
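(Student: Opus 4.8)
The plan is to establish the two assertions separately, deriving each from results already proved. For the negative part, I would observe that $C_p(S)$ is precisely the space $C_p(X)$ for the infinite compact metrizable space $X = S$. If a continuous linear surjection $T : C_p(S) \to E_w$ existed, then in particular $T$ would be a sequentially continuous linear operator from $C_p(S)$ into $E_w$, so Theorem \ref{Theor:general} would force the range of $T$ to be finite-dimensional. Since $T$ is surjective and $E$ is infinite-dimensional, this is a contradiction. Hence no continuous (indeed, no sequentially continuous) linear surjection $T : C_p(S) \to E_w$ can exist.

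For the positive part, I would invoke Corollary \ref{cor2}: since $E$ is a separable Banach space, there exists a continuous surjective mapping $T : C_p(S) \to E_w$. It then remains only to check that such a $T$ — or a suitable modification of it — satisfies $S \setminus B(T)$ finite. Here I would appeal to Lemma \ref{lem3}: any continuous map $T : C_p(S) \to E_w$ is in particular sequentially continuous (continuity implies sequential continuity), so Lemma \ref{lem3} applies directly and gives that $S \setminus B(T)$ is finite. Thus the map produced by Corollary \ref{cor2} already has the required property, and no modification is needed.

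The only subtlety to address carefully is making sure the hypotheses of the cited results match: Theorem \ref{Theor:general} is stated for $C_p(X)$ with $X$ an arbitrary Tychonoff space, so taking $X = S$ is legitimate; Lemma \ref{lem3} requires $X$ infinite Tychonoff and $T$ sequentially continuous, both of which hold for $X = S$ and a continuous $T$; and Corollary \ref{cor2} is exactly the existence statement we need. I expect no real obstacle here — the proposition is essentially a packaging of Theorem \ref{Theor:general}, Corollary \ref{cor2}, and Lemma \ref{lem3} into a single clean statement highlighting the contrast between the linear and non-linear cases. If one wanted to be fully self-contained about why such a non-linear $T$ cannot be made linear, the first paragraph already supplies that, so the two halves of the proof reinforce the point that passing to non-linear maps genuinely enlarges the class of attainable targets.
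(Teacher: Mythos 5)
Your argument is correct and coincides with the paper's own proof, which derives the linear part from Theorem \ref{Theor:general} and the non-linear part from Corollary \ref{cor2} combined with Lemma \ref{lem3}. You have merely spelled out the same two-line deduction in more detail.
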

\begin{proof}
The first claim is an immediate consequence of Theorem \ref{Theor:general}.
The second claim is an immediate consequence of Corollary \ref{cor2} and Lemma \ref{lem3}.
\end{proof}

A mapping $T$ in Corollary \ref{cor2} is never a homeomorphism, because $C_p(S)$ is a metrizable and infinite-dimensional locally convex space,
while $E_w$ is metrizable provided it is finite-dimensional.   
In this section we are interested in finding necessary conditions for the existing of a homeomorphism $T:C_p(X)\to E_w$.
In order to find such conditions it is reasonable first to examine several basic topological properties that are satisfied by all infinite-dimensional spaces $E_w$.

\begin{remark}\label{remark3}\mbox{}\\
(1) {\it Countable chain condition (ccc)}.\\
It is a fundamental result on the weak topology, due to H.H. Corson, that $E_w$ satisfies the ccc property for every Banach space $E$ \cite[proof of Lemma 5]
{Corson}. 
However, $C_p(X)$ also always enjoys the ccc property \cite[Corollary 0.3.7]{Arch}, so we cannot distinguish these spaces by ccc.\\
(2) {\it Angelicity}.\\
Another fundamental result about weak topology is the Eberlein-\v{S}mulian theorem for every space $E_w$.
However, $C_p(X)$ also always enjoys this property for every compact space $X$ \cite{Arch}, so we cannot distinguish much
$E_w$ and $C_p(X)$ by angelicity.\\
(3) {\it Eberlein-Grothendieck property}.\\
A topological space $Z$ is called an {\it Eberlein-Grothendieck space} if
$Z$ homeomorphically embeds into the space $C_p(K)$ for some compact space $K$ (see \cite[p. 95]{Arch}).  
It is widely known that $E_w$ always embeds into $C_p(K)$, where $K$ is the compact unit ball of the dual $E^{\prime}$ endowed with the weak$^{\ast}$ topology,
i.e. $E_w$ always is an Eberlein-Grothendieck space.\\
(4) {\it $k$-space, sequentiality, Fr\'echet-Urysohn property}.\\
All the three properties coincide for each $C_p(X)$ by the Gerlits-Nagy theorem (see \cite{Arch}).
If $X$ is compact then $C_p(X)$ enjoys these properties if and only if $X$ is scattered.
Vice versa, if $E_{w}$ is  Fr\'echet-Urysohn, then $E$ is finite-dimensional.
Several alternative proofs are known for this statement.
a) By \cite[Lemma 14.6]{kak} the closed unit ball $B$  in $E$ is a $w$-neighbourhood of zero, so $E$ is finite-dimensional;
b) M. Krupski and W. Marciszewski \cite[Corollary 6.5]{Krupski-2} gave a simple proof for a stronger statement: $E_w$ is not a $k$-space, if $E$ is an infinite-dimensional
Banach space; and
c) Original proof of the latter fact appears in \cite[p. 280]{Sch}.
\end{remark}

We need several auxiliary results.

\begin{lemma}\label{lem_help1}
Let a Tychonoff space $X$ can be represented as a countable union of scattered compact sets.
Then the space $C_p(X)$ is Fr\'echet-Urysohn.
\end{lemma}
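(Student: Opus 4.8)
### Proof strategy for Lemma~\ref{lem_help1}

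The plan is to reduce the problem to the known fact that $C_p(K)$ is Fr\'echet--Urysohn for every scattered compact space $K$, and then to glue the countably many pieces together using a characterization of the Fr\'echet--Urysohn property for $C_p$-spaces in terms of a covering property of the underlying space. Write $X = \bigcup_{n\in\omega} K_n$ where each $K_n$ is scattered and compact. By the Gerlits--Nagy theorem (see Remark~\ref{remark3}(4)), $C_p(X)$ is Fr\'echet--Urysohn if and only if $X$ has the so-called property $(\gamma)$ (every open $\omega$-cover contains a $\gamma$-subcover); more conveniently, I would invoke the fact that a countable union of spaces each of whose $C_p$ is Fr\'echet--Urysohn, under suitable closedness/compactness hypotheses, again has this property. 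The cleanest route: each $K_n$ is scattered compact, hence $C_p(K_n)$ is Fr\'echet--Urysohn, hence $K_n$ has property $(\gamma)$; a compact space has property $(\gamma)$ trivially only when it is scattered, so this is exactly the content we are allowed to use.

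First I would recall the relevant covering-theoretic fact: a Tychonoff space $Z$ is such that $C_p(Z)$ is Fr\'echet--Urysohn precisely when $Z$ has property $(\gamma)$, and property $(\gamma)$ is preserved by countable unions of closed subspaces when each subspace has it (this is a standard stability property of $(\gamma)$-spaces, going back to Gerlits--Nagy). Since each $K_n$ is compact, it is closed in $X$, so $X$ is a countable union of closed $(\gamma)$-subspaces. Alternatively, and perhaps more transparently for a self-contained note, I would argue directly at the level of function spaces: the restriction map $C_p(X) \to \prod_n C_p(K_n)$ is a continuous injection (by the partition into the $K_n$, or rather because the $K_n$ cover $X$ so a continuous function on $X$ is determined by its restrictions), and in fact a homeomorphic embedding onto a subspace. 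A countable product of Fr\'echet--Urysohn spaces need not be Fr\'echet--Urysohn in general, so this naive product argument is the point that needs care — the Fr\'echet--Urysohn property is notoriously not productive.

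Consequently the main obstacle is exactly this non-productivity: one cannot simply say ``$C_p(X)$ embeds in a product of Fr\'echet--Urysohn spaces, done.'' The fix is to use the genuinely stronger property that $C_p(K)$ satisfies for scattered compact $K$ — namely that it is not merely Fr\'echet--Urysohn but has the stronger property $(\gamma)$ / is a so-called $\gamma$-space, equivalently $K$ is a $\gamma$-set, and this IS countably productive and preserved under countable closed unions on the space side. So the real work is: (i) identify that ``$C_p(K)$ Fr\'echet--Urysohn for $K$ compact'' is equivalent to ``$K$ scattered'' and that the underlying combinatorial property $(\gamma)$ of $K$ is closed under countable unions (each $K_n$ being compact, hence closed in $X$); (ii) conclude $X$ has property $(\gamma)$; (iii) apply Gerlits--Nagy in the reverse direction to get $C_p(X)$ Fr\'echet--Urysohn. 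I would lay out the argument in that order, flagging step (i) as the crux and citing \cite{Arch} for the Gerlits--Nagy equivalence and for the permanence of $(\gamma)$ under countable closed unions.
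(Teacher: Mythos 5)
Your reduction to the Gerlits--Nagy characterization is fine as far as it goes: $C_p(Z)$ is Fr\'echet--Urysohn iff $Z$ has property $(\gamma)$, and a compact space has property $(\gamma)$ iff it is scattered. The gap is your step (ii), on which the whole argument rests: the assertion that property $(\gamma)$ is preserved by countable unions of closed subspaces is not a standard stability property and is not a theorem of Gerlits--Nagy. In fact it fails in general: it is consistent that there exist two $\gamma$-sets of reals whose union (equivalently, whose topological sum, in which both pieces are clopen) is not a $\gamma$-set; equivalently, a product of two Fr\'echet--Urysohn spaces of the form $C_p(Z)$ need not be Fr\'echet--Urysohn (see the work of Miller, Tsaban and Zdomskyy on unions and products of $\gamma$-spaces). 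The obstruction is exactly the one you would meet trying to prove the preservation directly: from an $\omega$-cover of $X$ you can extract a $\gamma$-subcover for each piece $K_n$ separately, but merging countably many $\gamma$-subcovers only produces a cover in which each point lies in infinitely many members, not in all but finitely many, and no diagonalization is available without additional hypotheses. The only instance of your step (ii) that is actually true is the one with scattered compact pieces --- which is precisely the content of the lemma, so as written the argument is circular at its crucial step.

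The paper avoids covering properties altogether. It replaces each $X_n$ by its $\aleph_0$-modification $Y_n$ (the $G_\delta$-topology), which is a Lindel\"of $P$-space because $X_n$ is scattered compact \cite[Lemma II.7.14]{Arch}; the free union $Y$ of the $Y_n$ is still a Lindel\"of $P$-space, the obvious map $Y\to X$ is a continuous surjection, so its dual embeds $C_p(X)$ homeomorphically into $C_p(Y)$, and $C_p$ of a Lindel\"of $P$-space is Fr\'echet--Urysohn \cite[Theorem II.7.15]{Arch}. If you want to salvage your route, you would have to prove the union statement for scattered compact pieces directly, and the cleanest known way to do that is essentially this $P$-space argument.
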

\begin{proof} Denote $X = \bigcup\{X_n: n \in \omega \}$, where each $X_n$ is a scattered compact space.
Define $Y_n$ to be the $\aleph_0$-modification of the topological space $X_n$, i.e. the family of all
 $G_{\delta}$-sets of $X_n$ is declared as a base of the topology of $Y_n$.
It is known that every $Y_n$ is a Lindel\"of $P$-space (see e.g. \cite[Lemma II.7.14]{Arch}).
 Define $Y$ to be the free countable union of all $Y_n$.
Evidently, $Y$ remains a Lindel\"of $P$-space. We define a natural continuous mapping $\varphi$ from $Y$ onto $X$ as follows.
Let $y \in Y$, then $y = x \in Y_n$ for a certain unique $n \in \omega$, and we define $\varphi(y) = x \in X$.
The map dual to $\varphi$ homeomorphically embeds $C_p(X)$ into $C_p(Y)$. The space $C_p(Y)$ is Fr\'echet-Urysohn (see e.g. \cite[Theorem II.7.15]{Arch}),
therefore the space $C_p(X)$ is Fr\'echet-Urysohn as well. 
\end{proof}

\begin{lemma}\label{lem_help2}
Let $X$ be a non-scattered compact space. Then 
for every finite set $A \subset X$ there is a non-scattered compact set $Y$ such that $Y \subset X \setminus A$.
\end{lemma}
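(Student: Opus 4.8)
The plan is to argue by contradiction: suppose that for some finite $A \subset X$ the complement $X \setminus A$ contains no non-scattered compact subset, i.e. every compact subset of $X \setminus A$ is scattered. First I would reduce to the case where $A$ is a single point; indeed, once the one-point case is established, an obvious induction on $|A|$ finishes the job, since removing one point from a non-scattered compact space yields (after passing to a suitable non-scattered compact subset) another non-scattered compact space from which the next point of $A$ can be removed. So fix $a \in X$ and assume toward a contradiction that every compact $K \subset X \setminus \{a\}$ is scattered.

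The key idea is to exhaust $X \setminus \{a\}$ by countably many compact sets and invoke a Baire-category / scattered-union argument. Choose a decreasing neighbourhood base is not available in general (compact spaces need not be first countable), so instead I would pick, for each $n$, an open set $V_n \ni a$ with $\overline{V_n}$ shrinking appropriately, or more robustly: since $X$ is compact Hausdorff hence normal, write $X \setminus \{a\} = \bigcup_{n} F_n$ where each $F_n$ is closed in $X$ (take $F_n = X \setminus U_n$ for a decreasing sequence of open $U_n \ni a$ with $\bigcap_n U_n$ as small as one can arrange; in a compact space one can at least get $F_n$ closed, hence compact, with $\bigcup_n F_n \supseteq X \setminus \{a\}$ by choosing the $U_n$ to separate $a$ from more and more points — this uses that points are $G_\delta$ is \emph{not} assumed, so care is needed here). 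Each $F_n$ is then a compact subset of $X \setminus \{a\}$, hence scattered by assumption. Now $X = \{a\} \cup \bigcup_n F_n$ is a countable union of scattered compact sets, so by Lemma \ref{lem_help1} (or rather by the well-known fact that a countable union of scattered compact spaces has scattered closure, or directly) $X$ itself would be scattered, contradicting the hypothesis that $X$ is non-scattered.

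The main obstacle is exactly the step of writing $X \setminus \{a\}$ as a countable union of compact sets: this is automatic only when $\{a\}$ is a $G_\delta$ in $X$, which need not hold for a general compact space. To handle the general case I would instead argue locally and pointwise: if $X$ is non-scattered, then $X$ contains a non-empty compact subset $P$ with no isolated points (a \emph{perfect} set — take $P$ to be the perfect kernel, the intersection of the transfinite Cantor–Bendixson derivatives, which is non-empty precisely because $X$ is non-scattered). A non-empty compact set without isolated points is infinite, so I can choose a point $b \in P$ with $b \ne a$; then $P$ is non-scattered but might still contain $a$. Since $P$ has no isolated points, $P \setminus \{a\}$ is dense in $P$, and I would like a non-scattered compact subset of $P$ avoiding $a$. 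Pick an open neighbourhood $W$ of $b$ in $X$ with $a \notin \overline{W}$ (possible by Hausdorffness); then $P \cap \overline{W}$ is a compact subset of $X \setminus \{a\}$, it contains $b$, and because $P$ has no isolated points and $W$ is open, $P \cap \overline W$ has non-empty interior-in-$P$ part with no isolated points, so $P \cap \overline{W}$ is non-scattered. Set $Y = P \cap \overline{W} \subset X \setminus \{a\}$; iterating this for each of the finitely many points of $A$ (at each stage replacing $X$ by the non-scattered compact set just produced and $a$ by the next point of $A$) yields the desired non-scattered compact $Y \subset X \setminus A$. The only routine verifications left are that a closed subset of a space with no isolated points, with non-empty interior relative to that space, again has no isolated points, and that non-empty-without-isolated-points implies non-scattered — both immediate from the definitions.
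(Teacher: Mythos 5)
Your final argument is correct, but it takes a genuinely different route from the paper. The paper invokes the Pe\l czy\'nski--Semadeni theorem (a compact space is scattered iff it admits no continuous surjection onto $[0,1]$): it takes a surjection $f:X\to[0,1]$, picks a segment $[a,b]\subset[0,1]\setminus f(A)$, and sets $Y=f^{-1}([a,b])$, which disposes of the whole finite set $A$ in one stroke. You instead work with the Cantor--Bendixson perfect kernel $P$, separate a point $b\in P$ from $a$ by a closed neighbourhood $\overline W$, and take $Y=P\cap\overline W$, then induct on $|A|$. Your approach is more elementary and self-contained (it needs only the definition of scattered and basic Hausdorff separation, not the characterization via maps onto $[0,1]$), at the cost of the induction --- which you could in fact avoid by choosing $b\in P\setminus A$ and $\overline W$ disjoint from all of the finite set $A$ at once. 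Your first (Baire-category) attempt is indeed doomed for the reason you yourself identify ($\{a\}$ need not be $G_\delta$), but since you abandon it, no harm done. One small inaccuracy in your wrap-up: the claim that ``a closed subset of a space with no isolated points, with non-empty interior relative to that space, again has no isolated points'' is false in general (e.g.\ $[0,1]\cup\{2\}$ inside $\mathbb R$); what you actually need, and what your construction does deliver, is that $Y=P\cap\overline W$ \emph{contains} the nonempty crowded subset $P\cap W$, and any space containing a nonempty subspace without isolated points is non-scattered. With that phrasing fixed, the proof is complete.
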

\begin{proof}
We shall use the following classic theorem of A. Pe\l czy\'nski and Z. Semadeni:
Let $X$ be a compact space, then $X$ is scattered if and only if there is
 no continuous mapping of $X$ onto the segment $[0,1]$ (see \cite[Theorem~8.5.4]{Semadeni}).
Since $X$ is not scattered, there exists  a continuous surjection $f:X\to [0,1]$. 
Since the set $A$ is finite, we find a segment $[a,b]\subset [0,1]\setminus f(A)$.
Then $Y=f^{-1}([a,b])$ is a compact non-scattered subset of $X.$ 
\end{proof}

Now we are ready to present the proof of Theorem \ref{theorem3}.

\begin{proof}[Proof of Theorem \ref{theorem3}]
We have already observed in Remark \ref{remark3}
that $E_w$ always is an  Eberlein-Grothendieck space.
Hence, by our assumptions $C_p(X)$ is the image under a continuous open mapping of an Eberlein-Grothendieck space.
Making use of the fundamental result of O. Okunev, we immediately conclude that $X$ must be a $\sigma$-compact space (see \cite[Theorem 4]{okunev} or \cite[Corollary III.2.9]{Arch}).

Denote $X = \bigcup\{X_n: n \in \omega \}$, where each $X_n$ is a compact space. We claim that at least one component $X_n$ is non-scattered, and then clearly $X$ is non-scattered.
Indeed, otherwise, the space $C_p(X)$ would be Fr\'echet-Urysohn by Lemma \ref{lem_help1}, therefore also $E_w$ would be Fr\'echet-Urysohn, which is false, again by Remark \ref{remark3}.
Fix any $n$ such that the compact set $X_n$ is non-scattered.  

According to Lemma~\ref{lem3}, the set $X\setminus B(T)$ is finite, therefore we can apply Lemma \ref{lem_help2} and find a 
non-scattered compact set $Y \subset X_n \cap B(T)$. 
For every $t\in Y$, we choose open sets $V_t$ and $W_t$  such that $t\in V_t\subset  \overline{V_t}\subset W_t \subset B(T)$ and
$$\sup\{\|T(g)\|:g\in C(X),\supp(g)\subset W_t\} < \infty.$$

Since $Y$ is compact, we find finitely many sets $V_t$ covering $Y$.
There exists at least one $t \in Y$ such that $\overline{V_{t}}\cap Y$ is not scattered. Indeed, assuming that each $\overline{V_{t}}\cap Y$ is scattered we would get that
a non-scattered compact space $Y$ is covered by finitely many scattered compact sets, which is obviously impossible.
Fix a non-scattered $Z=\overline{V_{t}} \cap Y$. The next fact plays a crucial role: the space $C_p(Z)$ is not Fr\'echet-Urysohn. 
 We prove the following

\emph{Claim}: $F=\{f\in C(X):\supp(f)\subset W_{t}\}$
 \emph{is not a Fr\'echet-Urysohn space}.

Indeed, it is clear that $F$ is a closed subset of $C_p(X)$.
Let $G$ be a subset of $C_p(Z)$ satisfying the property: there exists $g\in \overline{G}$ such that
does not exist any sequence in $G$ which converges to $g$ in $C_p(X)$.
 Let
$$H=\{f\in F:f|_{Z}\in G\}.$$
Using the Tietze-Urysohn theorem and the Urysohn lemma we deduce that $\{f|_{Z}:f\in F\}=C_p(Z)$. 
Hence the space $\{f|_{Z}:f\in F\}$ fails the Fr\'echet-Urysohn property. To complete the proof  it is enough to show that
 $\{f|_{Z}:f\in \overline{H}\}=\overline{G}$. It is clear that $\{f|_{Z}:f\in \overline{H}\}\subset \overline{G}$.
Suppose that $$h\in \overline{G} \setminus \{f|_{Z}:f\in \overline{H}\}.$$ 
Let $\tilde{h}\in F$ be such that $\tilde{h}|_{Z}=h$. Since $\overline{H}$ is a closed subset of $C_p(X)$ 
and $\tilde h\notin \overline{H}$, we find $N\in \mathbb N$,  $\{t_1,\ldots,t_N\}\subset W_{t}$ and  $\varepsilon_j>0$ for every $1\leqslant j\leqslant N$ such that
 $$\{f\in F:|\tilde h(t_j)-f(t_j)|<\varepsilon_j, 1\leqslant j\leqslant N\}\cap \overline{H}=\emptyset.$$
Either
$\{t_1,\ldots,t_N\}\cap Z=\emptyset$ or $\{t_1,\ldots,t_N\}\cap Z\neq \emptyset$.
In the first case, according to the Tietze-Urysohn theorem for every $f\in G$ we find $\tilde{f}\in H$ such that 
$\tilde{f}|_{Z}=f$ and $\tilde{f}(t_j)=\tilde{h}(t_j)$
 for every $1\leqslant j\leqslant N$. Consequently, only the second case may hold. We may assume that there exists  $1\leqslant L\leqslant N$ such that
$$\{t_1,\ldots,t_N\}\cap Z= \{t_1,\ldots,t_L\}.$$ It is clear that
$$\{f|_{Z}: f \in F, |h(t_j)-f(t_j)|<\varepsilon_j, 1\leqslant j\leqslant L\}\cap G=\emptyset.$$
Consequently, $h\notin \overline{G}$. Thus we have arrived at a contradiction. The  Claim has been proved.

Finally, relying on the definition of $F$, we observe that $T(F)$ is a bounded subset of $E$ which is not a Fr\'echet-Urysohn space in the weak topology.
 According to \cite[Proposition 4.4]{Barroso}, the Banach space $E$ contains a subspace isomorphic to $l_1$, which finishes the proof.
\end{proof}

In \cite[Corollary 5.11, Theorem 5.12]{Krupski-2} M. Krupski and W. Marciszewski proved that for infinite compact spaces $K$ and $L$, where $L$ is scattered,
 the spaces $C_{p}(K)$,  $C(L)_{w}$  and the spaces $C_{p}(L)$, $C(K)_{w}$ are not homeomorphic.
Our Theorem \ref{theorem3} generalizes both results because the Banach space $C(L)$ does not contain a subspace isomorphic to $l_1$, if $L$ is scattered.

\begin{proof}[Proof of Corollary \ref{cor:main1}]
By Theorem \ref{theorem3} we have that $X = \bigcup\{X_n: n \in \omega \}$, where each $X_n$ is a compact space.
On the other hand, if $E$ is a WCG  Banach space, then $E_w$ contains a dense $\sigma$-compact subspace.
Therefore, $C_p(X)$ also contains a dense $\sigma$-compact subspace, which we denote by $Y$.
 For each $n\in\omega$ consider the restriction mapping $\pi_n$ from $C_p(X)$ onto 
 $C_p(X_n)$. It is easily seen that $Z_n = \pi_n(Y)$ is a dense $\sigma$-compact subspace of $C_p(X_n)$. 
It follows that a compact space $X_n$ is Eberlein  (see \cite[Theorem IV.1.7]{Arch}) for each $n$.
Assume now that $E$ is separable, then $E_w$ is also separable, hence analogously to the above case, $C_p(X_n)$ is separable for each $n$.
It follows that a compact space $X_n$ is metrizable  (see \cite[Theorem I.1.5]{Arch}) for each $n$.
The rest is provided by Theorem \ref{theorem3}.
\end{proof}

\begin{proof}[Proof of Corollary \ref{cor:main2}]
This is done actually in the previous proof. Just replace $X_n$ by $X$.
\end{proof}

\begin{remark}\label{remark_l1}
Necessary conditions in Theorem \ref{theorem3} are not sufficient,
because  the Banach space $E = \ell_{1}$ in the weak topology is never homeomorphic to any space $C_p(X)$.
The reason is the following: every separable Banach space $E$ with the Schur property is an $\aleph_0$-space in
the weak topology. But $C_p(X)$ is an $\aleph_0$-space if and only if $X$ is countable, i.e. if and only if $C_p(X)$ is metrizable.
(For the details see \cite{Gab2}).   
\end{remark}

\begin{remark}\label{remark_gulko}
The arguments used in the proof of Theorem \ref{theorem3} are not applicable for the question whether $X$ in that result
must be a compact space and not just a $\sigma$-compact space. This is because compactness of $X$ is not invariant under the homeomorphisms of the spaces $C_p(X)$.
For instance, the spaces $C_p[0,1]$ and $C_p(\mathbb{R})$ are homeomorphic \cite{gulko}.
\end{remark}

We finish the paper by the following challenging question.

\begin{problem}\label{prob5}
Does there exist a separable Banach space $E$ such that $E_w$ is homeomorphic to $C_p[0,1]$?
\end{problem}

\end{document}